\begin{document}
\title{An a posteriori error analysis based on non-intrusive spectral projections for systems of random conservation laws}
\author{Jan Giesselmann\thanks{Department of Mathematics, TU Darmstadt, Dolivostra\ss e 15,  64293 Darmstadt, Germany. 
} 
\and Fabian Meyer\thanks{Institute of Applied Analysis and Numerical Simulation, University of Stuttgart, 
Pfaffenwaldring 57, 70569 Stuttgart, Germany. 
\newline F.M., C.R. thank the Baden-W{\"u}rttemberg Stiftung for support via the project 'SEAL'. J.G. thanks the German Research Foundation (DFG) for support of the project via DFG grant GI1131/1-1. $^*$fabian.meyer@mathematik.uni-stuttgart.de}~$^{,*}$
\and Christian Rohde\samethanks }
\date{April 30, 2019}
\providecommand{\keywords}[1]{{\textit{Key words:}} #1\\ \\}
\providecommand{\class}[1]{{\textit{AMS subject classifications:}} #1}
\maketitle

\begin{abstract}  \noindent
We present an a posteriori error analysis for one-dimensional random
hyperbolic systems of  conservation laws. 
For the discretization of the random space we consider the Non-Intrusive Spectral Projection method, the spatio-temporal discretization 
uses the Runge--Kutta Discontinuous Galerkin method.
We derive an a posteriori error estimator
using smooth reconstructions of the numerical solution, which combined with the relative entropy stability framework yields computable error bounds for the space-stochastic discretization error.
Moreover, we show that the estimator admits a splitting into a stochastic and
deterministic part.

\end{abstract}
\keywords{hyperbolic conservation laws, random pdes, a posteriori error estimates, non-intrusive spectral projection
method, discontinuous galerkin method}

\section{Introduction}
In this contribution we study numerical schemes for spatially one-dimensional systems of random hyperbolic conservation laws, where the uncertainty
stems from random initial data.
The random space is discretized using the Non-Intrusive Spectral Projection (NISP) method which is based on 
discrete orthogonal projections, cf. \cite{LeMaitreOlivier2002}. 
The resulting deterministic equations are 
discretized by a Runge--Kutta Discontinuous Galerkin (RKDG) method \cite{CockburnShu2001}.
We reconstruct the numerical solutions based on reconstructions for determinstic problems suggested in \cite{GiesselmannDedner16}, see also \cite{GiesselmannMeyerRohde19IMAJNA} for their use in Stochastic Galerkin schemes.
Based on these reconstructions and using the relative entropy framework, cf. \cite[Section 5.2]{Dafermos2016}, we 
derive an a posteriori error bound for the difference between the exact solution of the random
hyperbolic conservation law and its
numerical approximation. We show that the corresponding residual admits a decomposition into three parts:
A spatial part, a stochastic part, and a part which quantifies the quadrature error introduced
by the discrete orthogonal projection.
This decomposition paves the way for novel residual-based adaptive numerical schemes. 

The article is structured as follows: In \secref{sec:prelim} we describe the problem of interest. 
In \secref{sec:discretization} 
the NISP and RKDG method is reviewed and we show how to obtain the reconstruction from our numerical solution.
\secref{sec:aposteriori} presents our main a posteriori error estimate with
decomposition of the residual.
\section{Statement of the Problem} \label{sec:prelim}
Let $(\Omega, \F,\P)$ be a probability space, where $\Omega$ is the set of all elementary events $\omega \in \Omega$, $\F$ is a $\sigma$-algebra
on $\Omega$ and $\P$ is a probability measure.
We consider uncertainties parametrized by a random variable
 $\xi : \Omega\to  \Xi \subset \R$ with probability density function $w_{\xi} :\Xi \to \R_+$.
The random variable induces a probability measure $\tilde{\P}(B):= \P( \xi^{-1}(B))$ for all $B \in \mathcal{B}(\Xi)$ on the measurable space $(\Xi, \mathcal{B}(\Xi))$,
where $\mathcal{B}(\Xi)$ is the corresponding Borel $\sigma$-algebra.
This measure is called the law of $\xi$ and in the following we work on the 
image probability space $(\Xi, \mathcal{B}(\Xi), \tilde{\P})$.
For a second measurable space ($E, \mathcal{B}(E))$, we consider the weighted $L_\xi^p$-spaces equipped with the norm
\begin{align*}
\|f\|_{L_\xi^p(\Xi;E)} :=
\begin{cases}
\Big(\int \limits_\Xi\|f(y)\|_E^p ~w_\xi(y)\mathrm{d}y\Big)^{1/p} = \E\Big(\|f\|_E^p\Big)^{1/p},\quad& 1\leq p<\infty \\
\operatorname{ess~sup}\limits_{y \in \Xi} \|f(y)\|_E,  &p= \infty .
\end{cases} 
\end{align*}
Our problem of interest is the following initial value problem for an one dimensional system of $m \in \N$ random conservation laws, i.e.,
\begin{equation} \label{eq:RIVP}   \tag{RIVP}
\begin{cases}
\partial_t \stochSol(t,x,y) + \partial_x  \Flux(\stochSol(t,x,y))= 0, &(t,x,y)  \in   (0,T) \times \R \times ~\Xi,
\\ \stochSol(0,x,y)= \stochSol^0(x,y),  &(x,y) \in  \R \times ~ \Xi.\notag
\end{cases}
\end{equation} 
Here, $\stochSol(t,x,y) \in \U \subset \R^m$ is the vector of conserved unknown quantities, $\Flux\in C^2(\U;\R^m)$,
is the flux function, $\stochSol^0$ is the uncertain initial condition,
$\mathcal{U}\subset \R^m$ is the state space, which is assumed to be an open set and  $T \in (0,\infty)$ describes the final time.
We assume that \eqref{eq:RIVP} is strictly hyperbolic, i.e.~its Jacobian $\D \Flux(\stochSol)$ has $m$
distinct real eigenvalues.

We say that $(\eta,q)\in C^2(\U;\R)$ forms an entropy/entropy-flux pair
if $\eta$  is strictly convex and if $\eta$ and $q$ satisfy $\D \eta \D \Flux =\D q$.
We assume that the random conservation law \eqref{eq:RIVP} is equipped with at least one
entropy/entropy-flux pair.
Following the definition in \cite{RisebroSchwab2016} for scalar problems,
we call $\stochSol \in \leb{1}{\Xi}{L^1((0,T)\times \R;\U)}$  a random entropy solution of \eqref{eq:RIVP}, if $\stochSol(\cdot,\cdot,y)$
is a classical entropy solution, cf. \cite[Def.~4.5.1]{Dafermos2016}, $\Pas$.
The well-posedness of \eqref{eq:RIVP}, will not be discussed in this article but can
found in \cite{GiesselmannMeyerRohde19}, where existence and uniqueness of random entropy solutions
for \eqref{eq:RIVP} with random flux functions and random initial data with sufficiently small total variation is proven, based on the results of \cite{BressanLeFloch1997}.

\section{Space-Time Stochastic Discretization and Reconstructions} \label{sec:discretization}
For the stochastic discretization of \eqref{eq:RIVP} we use the NISP method, \cite{LeMaitreOlivier2002},
which is based on the (generalized) polynomial chaos expansion which was introduced in \cite{XiuKarniadakis2002}. 
Under the assumption that $\stochSol$ is square-integrable with respect to $\Xi$, 
we expand the solution of \eqref{eq:RIVP} into a generalized Fourier series 
using  a suitable orthonormal basis.

Let ${\{\Psi_{i}(\cdot)\}_{i\in \N}: \Xi \to \R}$ be a  $\Leb_\xi^2(\Xi)$-orthonormal basis,
i.e. for $i,j\in \N$ we have
\begin{equation}
\begin{aligned}\label{eq.orthogonal}
\Big\langle \Psi_{i},\Psi_{j} \Big\rangle := \E\Big(\Psi_{i} \Psi_{j}\Big) 
= \int \limits_{\randomSpace} \Psi_{i}(y)\Psi_{j}(y) w_{\xi}(y) ~\mathrm{d}y=\delta_{ij}.
\end{aligned}
\end{equation}
Following \cite{XiuKarniadakis2002}, the random entropy solution $\stochSol$ can be written as
\begin{align} \label{eq:series}
\stochSol(t,x,y)= \sum \limits_{\globalIdx=0}^\infty \stochSol_\globalIdx(t,x) \Psi_{\globalIdx}(y),
\end{align}
with (deterministic) Fourier modes $u_\globalIdx=u_\globalIdx(t,x)$ satisfying
\begin{align}\label{eq:modes}
\stochSol_\globalIdx(t,x)=\E\Big(\stochSol(t,x,\cdot) \Psi_\globalIdx(\cdot)\Big) \qquad\forall \globalIdx\in \N.
\end{align}
The NISP method approximates the modes in \eqref{eq:modes} via a discrete orthogonal projection, i.e., numerical quadrature. We denote $(\quadOrder+1) \in \N$ quadrature points and weights   by
$\{y_{\globalNispIdx}\}_{\globalNispIdx=0}^{\quadOrder}$, $\{w_{\globalNispIdx}\}_{\globalNispIdx=0}^{\quadOrder}$,  
and approximate
\begin{align}\label{eq:NISPMode}
\stochSol_\globalIdx(t,x)= \int \limits_{\randomSpace} \stochSol(t,x,y) \Psi_\globalIdx(y) w_{\xi}(y) ~\mathrm{d}y \approx
\sum \limits_{\globalNispIdx=0}^R \stochSol(t,x,y_\globalNispIdx) \Psi_\globalIdx(y_\globalNispIdx)w_\globalNispIdx  =: \hat{u}_i
\quad \text{ for }\globalIdx \in \N.
\end{align}
In a second step the NISP method truncates \eqref{eq:series} after the $M$-th mode, i.e.,
\begin{align}
\stochSol(t,x,y) \approx \sum \limits_{\globalIdx=0}^\polyOrder \hat{u}_\globalIdx(t,x) \Psi_{\globalIdx}(y).
\end{align} 
For any $l=0,\ldots,\quadOrder$, the random entropy solution $u$ of \eqref{eq:RIVP} evaluated at quadrature point $\{y_\globalNispIdx\}_{\globalNispIdx=0}^{\quadOrder}$, is denoted by $\stochSol(\cdot,\cdot,y_\globalNispIdx)$ and it
is an entropy solution of the deterministic version of \eqref{eq:RIVP}, i.e.  of
\newtagform{mytag}{(}{$)_\globalNispIdx$}
\usetagform{mytag}
\begin{equation} \label{eq:DIVP}   \tag{DIVP}
\begin{cases}
\partial_t \stochSol(t,x,y_\globalNispIdx) + \partial_x  \Flux(\stochSol(t,x,y_\globalNispIdx))= 0, &(t,x)  \in   (0,T) \times \R,
 \\ \stochSol(0,x,y_\globalNispIdx)= \stochSol^0(x,y_\globalNispIdx),  & x \in  \R\notag.
\end{cases}
\end{equation} 
The deterministic hyperbolic systems \eqref{eq:DIVP} can be discretized\newtagform{mytag2}{(}{)}
\usetagform{mytag2}by a suitable numerical method. We use the RKDG
method as described in \cite{CockburnShu2001}. We denote the corresponding numerical solution 
of \usetagform{mytag} \eqref{eq:DIVP} \usetagform{mytag2}at quadrature point $\{y_\globalNispIdx\}_{\globalNispIdx=0}^{\quadOrder}$ 
and at points $\{t_n(y_\globalNispIdx)\}_{n=0}^{\ntCells(y_\globalNispIdx)}$, $\ntCells(y_\globalNispIdx) \in \N$, 
in time by $\numSol^n(\cdot,y_\globalNispIdx) \in \DG{p}$,
where 
$$\DG{p}:= \{v: \R \to \R^m ~|~v\mid_{K} \in \P_p(K;\R^m),~K \in \T \},  $$
is the corresponding DG space of polynomials of degree $p \in \N$, 
associated with a uniform triangulation $\T$ of $\R$. 
Let us assume that the time partition $\{t_n\}_{n=0}^{\ntCells}$ and the triangulation $\T$ used for \usetagform{mytag} \eqref{eq:DIVP} \usetagform{mytag2}
are the same for every quadrature point  $\{y_\globalNispIdx\}_{\globalNispIdx=0}^{\quadOrder}$.
The numerical approximation of \eqref{eq:RIVP} at time $t=t_n$ can then be written as
\begin{align} \label{def:numSolNISP}
\numSol^n(x,y):=  \sum \limits_{\globalIdx=0}^{\polyOrder}
 \Big( \sum \limits_{\globalNispIdx=0}^{\quadOrder} \numSol^n(x,y_{\globalNispIdx})
  \Psi_\globalIdx(y_\globalNispIdx) w_\globalNispIdx \Big) \Psi_\globalIdx(y). 
\end{align} 
The proof of the a posterior error estimate in \thmref{thm:stochApostNumSol} uses the relative entropy framework, cf. \cite[Section 5.2]{Dafermos2016},
which requires one quantity which is at least Lipschitz continuous in space and time. To this end we reconstruct the
numerical solution so that we obtain a Lipschitz continuous function.
To avoid technical overhead, we do not elaborate upon this process here, but refer to \cite{GiesselmannDedner16,GiesselmannMeyerRohde19IMAJNA},
where a detailed description can be found.

The reconstruction provides us with a computable space-time reconstruction $\reconst{st}(y_{\globalNispIdx}) \in W_\infty^1((0,T);V_{p+1}^s\cap C^0(\R))$  of the 
numerical solution $\{\numSol^n(y_\globalNispIdx)\}_{n=0}^{\ntCells} \subset \DG{p}$, for each quadrature point $\{y_\globalNispIdx\}_{\globalNispIdx=0}^{\quadOrder}$.
This allows us to define a space-time residual as follows.
\begin{definition}[Space-time residual]
For all  $\globalNispIdx=0,\ldots, \quadOrder$, we define $\residual{st}(y_\globalNispIdx)\in \Leb^2((0,T)\times \R;\R^m)$ by
\begin{align}\label{def:STResidualColl}
\residual{st}(y_\globalNispIdx):= \partial_t \reconst{st}(y_\globalNispIdx)+ \partial_x \Flux(\reconst{st}(y_\globalNispIdx))
\end{align}
to be the space-time residual associated with the quadrature point $y_\globalNispIdx$.
\end{definition} 
Next we define the reconstructed mode, the
space-time-stochastic reconstruction and the space-time-stochastic residual. The latter is obtained by plugging  the space-time-stochastic reconstruction into the random conservation law \eqref{eq:RIVP}.
\begin{definition}[Space-time-stochastic reconstruction and residual] 
Let $\{\reconst{st}(y_\globalNispIdx) \}_{\globalNispIdx=0}^R: (0,T)\times \R \to \R^m$  
be the sequence of space-time reconstructions at quadrature points $\{y_\globalNispIdx\}_{\globalNispIdx=0}^{\quadOrder}$.
The reconstructed modes of \eqref{eq:NISPMode} are defined as
\begin{align}\label{def:STModeNISP}
\reconst{st}_{\globalIdx}:= \sum \limits_{\globalNispIdx=0}^R \reconst{st}(y_{\globalNispIdx}) \Psi_{\globalIdx}(y_\globalNispIdx) w_{\globalNispIdx},
\end{align}
for $\globalIdx =0,\ldots, \polyOrder$. The space-time-stochastic reconstruction $\reconst{sts}: (0,T)\times \R \times ~\Xi \to \R^m$
is defined as
\begin{align}\label{def:STSReconstNISP}
\reconst{sts}(t,x,y):= \sum \limits_{\globalIdx =0}^\polyOrder \reconst{st}_{\globalIdx}(t,x) \Psi_\globalIdx(y).
\end{align}
Finally, we define the space-time-stochastic residual $\residual{sts}\in \leb{2}{\Xi}{\Leb^2((0,T)\times \R;\R^m)}$ by
\begin{align}\label{eq:STSResidual}
\residual{sts}:= \deriv{t} \reconst{sts}+ \deriv{x}\Flux(\reconst{sts}).
\end{align}
\end{definition}
This residual is crucial in the upcoming error analysis.

\section{A Posteriori Error Estimate and Error Indicators}\label{sec:aposteriori}
Before stating the main a posterior error estimate, let us note that derivatives of the flux function and the entropy are bounded on any compcat subset $\mathcal{C}$ of the state space.
These bounds enter the upper bound in Theorem \ref{thm:stochApostNumSol}.
Let $\C \subset \U$ be convex and compact. Due to $\Flux \in C^2(\U,\R^m)$ and $\eta \in C^2(\U,\R^m)$ strictly convex 
there  exist constants $0<\fluxConstant < \infty$ and $0<\lowerEta < \upperEta< \infty$, s.t.
\begin{align*}
|v^\top H \Flux(u)v|\leq \fluxConstant|v|^2, \quad \lowerEta|v|^2 \leq v^\top H \eta(u)v \leq \upperEta |v|^2, \quad \forall v\in \R^m, \forall u\in \C.
\end{align*}
Here $H \Flux$ denotes the Hessian (i.e. the tensor of second order derivatives) of the flux function and $H\eta$ the Hessian of the entropy $\eta$.
We now have all ingredients together to state the following a posteriori error estimate
that can be directly derived from \cite{GiesselmannMakridakisPryer15}.
\begin{theorem}[A posteriori error bound for the numerical solution] 
\label{thm:stochApostNumSol}
Let $u$ be the random entropy solution of \eqref{eq:RIVP}. Then, for any  $n=0,\ldots,\ntCells$, the difference between $u(t_n, \cdot,\cdot)$ and the numerical solution $\numSol^n$ from  \eqref{def:numSolNISP} satisfies
\begin{align*}
\|\stochSol(t_n,\cdot,\cdot)-\numSol^n(\cdot,\cdot)& \|_{\leb{2}{{\Xi} }{\Leb^2(\R)}}^2 \\
  & \leq  ~ 2\|\reconst{sts}(t_n,\cdot,\cdot)- \numSol^n(\cdot,\cdot)\|_{\leb{2}{{\Xi} }{\Leb^2(\R)}}^2 
  \\ 
 & + 2 \lowerEta^{-1} \Big( \stsResidual(t_n) + \upperEta \stsResidual_0 \Big) 
 \\& \times \exp\Big( \lowerEta^{-1}\int \limits_0^{t_n} \Big(\upperEta \fluxConstant \|\partial_x \reconst{sts}(t,\cdot,\cdot)\|_{\leb{\infty}{\Xi}{\Leb^\infty(\R)}} + \upperEta^2\Big)~\mathrm{d}t\Big),
\end{align*}
with 
\begin{align*}
\stsResidual(t_n)&:= \|\residual{sts}(\cdot,\cdot,\cdot)\|_{\leb{2}{\Xi}{\Leb^2((0,t_n)\times \R)}}^2, \\ \stsResidual_0&:= \|u^0(\cdot,\cdot)-\reconst{sts}(0,\cdot,\cdot)\|_{\leb{2}{\Xi}{\Leb^2( \R)}}^2.
\end{align*}
\end{theorem}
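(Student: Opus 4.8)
The plan is to reduce the random estimate to a family of deterministic relative-entropy estimates, one for each realization $y\in\Xi$, and then integrate over the random space. The starting point is to insert the space-time-stochastic reconstruction $\reconst{sts}$ and apply the triangle inequality together with the elementary bound $(a+b)^2\le 2a^2+2b^2$ in the $\leb{2}{\Xi}{\Leb^2(\R)}$ norm:
\[
\|\stochSol(t_n,\cdot,\cdot)-\numSol^n\|_{\leb{2}{\Xi}{\Leb^2(\R)}}^2
\le 2\|\stochSol(t_n,\cdot,\cdot)-\reconst{sts}(t_n,\cdot,\cdot)\|_{\leb{2}{\Xi}{\Leb^2(\R)}}^2
+2\|\reconst{sts}(t_n,\cdot,\cdot)-\numSol^n\|_{\leb{2}{\Xi}{\Leb^2(\R)}}^2 .
\]
The second term is exactly the first summand of the claimed bound and needs no further work, so the entire analytical content sits in controlling the first term, the distance between the exact random entropy solution and the reconstruction.

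Next I would fix a realization $y$ and argue deterministically. For $\Pas$ the function $\stochSol(\cdot,\cdot,y)$ is an entropy solution of \eqref{eq:DIVP}, while $\reconst{sts}(\cdot,\cdot,y)$ is Lipschitz in space and time; hence the relative-entropy stability estimate of \cite{GiesselmannMakridakisPryer15} applies, with $\reconst{sts}$ in the role of the smooth comparison function. Writing the relative entropy as $\eta(\stochSol\,|\,\reconst{sts})=\eta(\stochSol)-\eta(\reconst{sts})-\D\eta(\reconst{sts})(\stochSol-\reconst{sts})$, this yields a Gronwall-type estimate of the schematic form
\[
\int_\R \eta\big(\stochSol(t_n,x,y)\,\big|\,\reconst{sts}(t_n,x,y)\big)\,\mathrm{d}x
\le \Big(\int_\R \eta\big(\stochSol^0(x,y)\,\big|\,\reconst{sts}(0,x,y)\big)\,\mathrm{d}x
+\|\residual{sts}(\cdot,\cdot,y)\|_{\Leb^2((0,t_n)\times\R)}^2\Big)\,e^{G(t_n,y)},
\]
where the exponent is $G(t_n,y)=\lowerEta^{-1}\int_0^{t_n}\big(\upperEta\fluxConstant\|\partial_x\reconst{sts}(t,\cdot,y)\|_{\Leb^\infty(\R)}+\upperEta^2\big)\,\mathrm{d}t$, the residual driving the accumulation being precisely $\residual{sts}$ from \eqref{eq:STSResidual}.

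I would then use the Hessian bounds on $\eta$ to pass between relative entropy and the squared $L^2$ distance. The lower bound $\tfrac12\lowerEta|v|^2\le v^\top H\eta(u)v$ gives $\tfrac12\lowerEta\,|\stochSol-\reconst{sts}|^2\le \eta(\stochSol\,|\,\reconst{sts})$, which produces the prefactor $\lowerEta^{-1}$; the upper bound turns the initial relative entropy into $\eta(\stochSol^0\,|\,\reconst{sts}(0,\cdot,y))\le\tfrac12\upperEta\,|\stochSol^0-\reconst{sts}(0,\cdot,y)|^2$, which is the source of the $\upperEta\,\stsResidual_0$ contribution. Tracking these constants converts the deterministic estimate into a pointwise-in-$y$ bound for $\|\stochSol(t_n,\cdot,y)-\reconst{sts}(t_n,\cdot,y)\|_{\Leb^2(\R)}^2$.

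Finally I would integrate the resulting pointwise bound over $\Xi$ against the weight $w_\xi$. The residual and initial terms integrate directly to $\stsResidual(t_n)$ and $\stsResidual_0$. The main obstacle is the Gronwall factor $e^{G(t_n,y)}$, which depends on $y$ through $\|\partial_x\reconst{sts}(t,\cdot,y)\|_{\Leb^\infty(\R)}$ and therefore cannot be pulled out of the $\Xi$-integral as it stands. I would resolve this by bounding the $y$-dependent integrand by its essential supremum over $\Xi$, i.e.\ replacing $\|\partial_x\reconst{sts}(t,\cdot,y)\|_{\Leb^\infty(\R)}$ by $\|\partial_x\reconst{sts}(t,\cdot,\cdot)\|_{\leb{\infty}{\Xi}{\Leb^\infty(\R)}}$; this makes the exponent independent of $y$, so the exponential becomes a common multiplicative constant and the integral of the bracket collapses to $\stsResidual(t_n)+\upperEta\stsResidual_0$, yielding exactly the stated expression. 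A secondary technical point I would have to address is that the constants $\lowerEta,\upperEta,\fluxConstant$ are only valid on the compact convex set $\C\subset\U$, so the relative-entropy argument presupposes that both $\stochSol(\cdot,\cdot,y)$ and $\reconst{sts}(\cdot,\cdot,y)$ take values in $\C$; I would either assume this a priori or choose $\C$ large enough to contain the ranges of both functions uniformly in $y$.
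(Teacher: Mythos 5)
Your proposal is correct and follows essentially the same route as the paper: the paper's proof likewise applies the relative entropy stability result of \cite{GiesselmannMakridakisPryer15} path-wise in $\Xi$, integrates over the random space, invokes Gronwall's inequality (with the same essential-supremum bound on the $y$-dependent exponent that the stated estimate reflects), and concludes with the triangle inequality. You have simply made explicit the details the paper leaves implicit, including the role of the Hessian bounds in producing $\lowerEta^{-1}$ and $\upperEta$ and the caveat that these constants require the solution and reconstruction to take values in the compact set $\C$.
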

\begin{proof}
We apply \cite[Lemma 5.1]{GiesselmannMakridakisPryer15} path-wise in $\Xi$,
integrate over $\Xi$ and use Gronwall's inequality to bound
$\|\stochSol(t_n,\cdot,\cdot)- \reconst{sts}(t_n,\cdot,\cdot)\|_{\leb{2}{{\Xi} }{\Leb^2(\R)}}$
by the second term in the inequality.
The final estimate then follows using the triangle inequality.
\end{proof}
In \thmref{thm:stochApostNumSol} the error between the numerical solution and the entropy solution is bounded
by  the error in the initial condition, the difference between the numerical solution and its reconstruction
and the contribution of the space-time stochastic residual $\residual{sts}$ from \eqref{eq:STSResidual}, quantified by $\stsResidual$.
We would like to distinguish between errors that arise from discretizing the random space and from discretizing the physical space.
Therefore, we show in \lemref{eq:splittingNISP} a splitting of the space-time-stochastic residual $\residual{sts}$ into three parts.~Namely a deterministic residual, which corresponds to the spatial error  
when approximating \usetagform{mytag} \eqref{eq:DIVP} \usetagform{mytag2} using the RKDG method, a quadrature residual that reflects the quadrature error
from the discrete orthogonal projection in \eqref{eq:NISPMode} and
a stochastic cut-off error, which occurs when truncating the infinite Fourier series in \eqref{eq:series}.

\begin{lemma}[Orthogonal decomposition of the space-time-stochastic residual] \label{lemma:splitting}
The space-time-stochastic residual $\residual{sts}$ from \eqref{eq:STSResidual} admits 
the following orthogonal decomposition,
\begin{align}\label{eq:splittingNISP}
\residual{sts}= \sum \limits_{j=0}^M  \Big(\residual{det}_j + \residual{sq}_j\Big) \Psi_j 
+ \sum \limits_{j>M}^\infty \residual{sc}_j   \Psi_j,
\end{align}
where 
\begin{align*}
& \residual{det}_{j} := \sum \limits_{\globalNispIdx=0}^{R} \fatresidual{st}(y_\globalNispIdx) \Psi_j(y_\globalNispIdx) w_\globalNispIdx \quad \text{ for }j= 0, \ldots, M  \\
& \residual{sq}_{j} :=  \Big \langle \deriv{x} \Flux\Big(\sum \limits_{\globalIdx=0}^\polyOrder \fatreconst{st}(y_\globalIdx) \Psi_\globalIdx\Big), \Psi_j\Big \rangle
 - \sum \limits_{\globalNispIdx=0}^{R} \deriv{x}\Flux(\reconst{st}(y_\globalNispIdx)) \Psi_j(y_\globalNispIdx) w_\globalNispIdx\quad \text{ for }j= 0, \ldots, \polyOrder \\
 & \residual{sc}_{j}:= \Big \langle \deriv{x} \Flux\Big(\sum \limits_{\globalIdx=0}^\polyOrder \fatreconst{st}(y_\globalIdx) \Psi_\globalIdx \Big), \Psi_j \Big \rangle \quad\text{ for } j>M
\end{align*}
are called the $j$-th mode of the deterministic, stochastic quadrature and stochastic cut-off residual.
Moreover, we have 
\begin{align} \label{eq:splittingNISPNorm}
\stsResidual(t)=\|\residual{sts}\|_{{\leb{2}{\Xi}{\Leb^2((0,t)\times \R}}}^2 &=  \sum \limits_{\globalIdx=0}^M  \|\residual{det}_\globalIdx + \residual{sq}_\globalIdx\|_{\Lp{2}{(0,t)\times \R}}^2 + 
\sum \limits_{\globalIdx>M}^\infty \| \residual{sc}_\globalIdx \|_{\Lp{2}{(0,t)\times \R}}^2  \nonumber\\
&\leq 2 \detResidual(t) + 2 \quadResidual(t) + \stochResidual(t),
\end{align}
where, for any $t\in(0,T)$,
\begin{align*}
&\detResidual(t):= \sum \limits_{\globalIdx=0}^M  \|\residual{det}_\globalIdx\|_{\Lp{2}{(0,t)\times \R}}^2, \
\quadResidual(t):= \sum \limits_{\globalIdx=0}^M  \|\residual{sq}_\globalIdx\|_{\Lp{2}{(0,t)\times \ \R}}^2,\\
& \stochResidual(t) :=\sum \limits_{\globalIdx>M}^\infty \| \residual{sc}_\globalIdx \|_{\Lp{2}{(0,t)\times \R}}^2.
\end{align*}
\end{lemma}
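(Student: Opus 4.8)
The plan is to evaluate $\residual{sts}$ directly from its definition \eqref{eq:STSResidual} and to read off the three mode families by projecting onto the $\Leb_\xi^2(\Xi)$-orthonormal basis $\{\Psi_j\}_{j\in\N}$. Substituting the space-time-stochastic reconstruction \eqref{def:STSReconstNISP}, the time-derivative contribution $\deriv{t}\reconst{sts}=\sum_{j=0}^M(\deriv{t}\reconst{st}_j)\Psi_j$ is already a finite generalized Fourier series supported on the modes $j=0,\dots,M$. The flux contribution $\deriv{x}\Flux(\reconst{sts})$ is square-integrable in $y$ (since $\reconst{sts}$ is a finite polynomial-chaos sum with values in the compact set $\C$ and $\Flux\in C^2(\U;\R^m)$), so it expands as $\deriv{x}\Flux(\reconst{sts})=\sum_{j=0}^\infty\langle\deriv{x}\Flux(\reconst{sts}),\Psi_j\rangle\Psi_j$ in $\leb{2}{\Xi}{\Lp{2}{(0,t)\times\R}}$. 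For $j>M$ only the flux contributes, and $\langle\deriv{x}\Flux(\reconst{sts}),\Psi_j\rangle$ is by definition $\residual{sc}_j$, which yields the second sum in \eqref{eq:splittingNISP}.

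For the low modes $j=0,\dots,M$ I would verify that $\deriv{t}\reconst{st}_j+\langle\deriv{x}\Flux(\reconst{sts}),\Psi_j\rangle$ coincides with $\residual{det}_j+\residual{sq}_j$. The decisive step is to add and subtract the quadrature approximation $\sum_{\globalNispIdx=0}^R\deriv{x}\Flux(\reconst{st}(y_\globalNispIdx))\Psi_j(y_\globalNispIdx)w_\globalNispIdx$ of the flux projection. The subtracted copy combines with $\langle\deriv{x}\Flux(\reconst{sts}),\Psi_j\rangle$ to form exactly $\residual{sq}_j$. For the retained copy, commuting $\deriv{t}$ with the finite sum in \eqref{def:STModeNISP} gives $\deriv{t}\reconst{st}_j=\sum_{\globalNispIdx=0}^R\deriv{t}\reconst{st}(y_\globalNispIdx)\Psi_j(y_\globalNispIdx)w_\globalNispIdx$; adding the retained quadrature flux term and invoking the definition \eqref{def:STResidualColl} of the space-time residual $\residual{st}(y_\globalNispIdx)=\deriv{t}\reconst{st}(y_\globalNispIdx)+\deriv{x}\Flux(\reconst{st}(y_\globalNispIdx))$ collapses the two sums into $\sum_{\globalNispIdx=0}^R\residual{st}(y_\globalNispIdx)\Psi_j(y_\globalNispIdx)w_\globalNispIdx=\residual{det}_j$. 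This proves the identity \eqref{eq:splittingNISP}.

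For the norm statement \eqref{eq:splittingNISPNorm} I would invoke Parseval's identity. Because the modal coefficients in \eqref{eq:splittingNISP} depend only on $(t,x)$ whereas the $\Psi_j$ are orthonormal in $\Leb_\xi^2(\Xi)$, for each fixed $(t,x)$ the $\Xi$-integral of $|\residual{sts}|^2$ separates into the sum of squared Euclidean norms of the modes; integrating over $(0,t)\times\R$ and applying Fubini gives the first equality in \eqref{eq:splittingNISPNorm}, with low-mode coefficients $\residual{det}_j+\residual{sq}_j$ and high-mode coefficients $\residual{sc}_j$. The inequality then follows from $\|a+b\|^2\le2\|a\|^2+2\|b\|^2$ applied to each of the first $M+1$ modes, which bounds $\sum_{j=0}^M\|\residual{det}_j+\residual{sq}_j\|_{\Lp{2}{(0,t)\times\R}}^2$ by $2\detResidual(t)+2\quadResidual(t)$, while the tail $\sum_{j>M}^\infty\|\residual{sc}_j\|_{\Lp{2}{(0,t)\times\R}}^2$ is exactly $\stochResidual(t)$.

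The algebra is elementary, so the only points demanding care, and hence the main (if mild) obstacle, are the analytic justifications underpinning the two expansions: that $\deriv{x}\Flux(\reconst{sts})$ lies in $\leb{2}{\Xi}{\Lp{2}{(0,t)\times\R}}$ so that its generalized Fourier series converges there (guaranteed by $\Flux\in C^2(\U;\R^m)$ together with the boundedness of the finite reconstruction $\reconst{sts}$ on the compact set $\C$), and that $\deriv{t}$ may be interchanged with the finite quadrature sum defining $\reconst{st}_j$ (immediate, since the sum is finite and each $\reconst{st}(y_\globalNispIdx)$ is $W_\infty^1$ in time). Once these are secured the decomposition is exact, and the orthonormality of $\{\Psi_j\}$ makes the splitting genuinely orthogonal, so that no cross terms survive in the Parseval step.
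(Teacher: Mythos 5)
Your proposal is correct and takes essentially the same route as the paper's proof: both project $\residual{sts}$ onto the orthonormal basis $\{\Psi_j\}$, identify the low modes $j\le M$ as $\residual{det}_j+\residual{sq}_j$ by combining the pathwise identity $\residual{st}(y_\globalNispIdx)=\deriv{t}\reconst{st}(y_\globalNispIdx)+\deriv{x}\Flux(\reconst{st}(y_\globalNispIdx))$ with the finite quadrature sum defining $\reconst{st}_j$ (your add-and-subtract of the quadrature flux term is exactly the paper's substitution step), identify the high modes as $\residual{sc}_j$, and conclude via the Pythagorean theorem in $L_\xi^2(\Xi)$ together with $\|a+b\|^2\le 2\|a\|^2+2\|b\|^2$. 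The integrability and differentiation-under-the-finite-sum justifications you add are sound and merely make explicit what the paper leaves implicit.
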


\begin{proof}
We recall that the space-time reconstruction 
$\reconst{st}(y_\globalNispIdx)$ satisfies
\begin{align} \label{eq:determinsticResidualNISP} 
\residual{st}(y_\globalNispIdx)= \deriv{t} \reconst{st}(y_\globalNispIdx) + \deriv{x} \Flux(\reconst{st}(y_\globalNispIdx) )
\end{align}
for all $\globalNispIdx=0,\ldots,R$.
Moreover, the reconstructed mode $\reconstNISP{j}{st}$ was defined as (cf. \eqref{def:STModeNISP})
\begin{align} \label{eq:stResidualModeNISP}
\reconstNISP{j}{st} = \sum \limits_{\globalNispIdx=0}^R \reconst{st}(y_\globalNispIdx) \Psi_{j}(y_\globalNispIdx) w_\globalNispIdx
\end{align}
for all $j=0,\ldots,M$.
Multiplying \eqref{eq:determinsticResidualNISP} by $\Psi_j (y_\globalNispIdx) w_\globalNispIdx$ and suming over $l=0,\ldots, R$ yields, using \eqref{eq:stResidualModeNISP}, the 
following relationship
\begin{align}\label{eq:relationResidualsNISP}
\sum \limits_{\globalNispIdx=0}^{R} \fatresidual{st}(y_\globalNispIdx) \Psi_j(y_\globalNispIdx) w_\globalNispIdx = \partial_t  \reconstNISP{j}{st} + \sum \limits_{\globalNispIdx=0}^{R} \deriv{x}\Flux(\reconst{st}(y_\globalNispIdx)) \Psi_j(y_\globalNispIdx) w_\globalNispIdx .
\end{align}
By definition of the space-time-stochastic residual we have
\begin{align*}
\residual{sts}= \deriv{t} \reconst{sts} + \deriv{x}\Flux(\reconst{sts})= \deriv{t}\Big(\sum \limits_{\globalIdx =0}^\polyOrder \reconst{st}_{\globalIdx} \Psi_\globalIdx\Big) + \deriv{x} \Flux\Big(\sum \limits_{\globalIdx =0}^\polyOrder \reconst{st}_{\globalIdx} \Psi_\globalIdx \Big).
\end{align*}  
Let us begin by studying the $j$-th mode of $\residual{sts}$ for $j=0,\ldots,M$. In this case the orthogonality relation \eqref{eq.orthogonal} 
yields 
\begin{align}\label{eq:projectionResidual}
\Big \langle \fatresidual{sts}, \Psi_j\Big \rangle  &= \Big \langle \deriv{t} \reconst{sts} + \deriv{x}\Flux(\reconst{sts}), \Psi_j\Big \rangle  = \deriv{t} \reconst{st}_j+ \Big \langle \deriv{x} \Flux\Big(\sum \limits_{\globalIdx = 0}^\polyOrder \reconst{st}_{\globalIdx} \Psi_\globalIdx \Big), \Psi_j \Big \rangle.
\end{align}
Using (\ref{eq:relationResidualsNISP}) we obtain
\begin{align} \label{eq:NiSPSplitting1}
\Big \langle \residual{sts}, \Psi_j \Big \rangle  =& \sum \limits_{\globalNispIdx=0}^{R} \fatresidual{st}(y_\globalNispIdx) \Psi_j(y_\globalNispIdx) w_\globalNispIdx
\\ & + 
 \Big \langle \deriv{x} \Flux\Big(\sum \limits_{\globalIdx = 0}^{\polyOrder} \reconst{st}_{\globalIdx} \Psi_i\Big), \Psi_j \Big \rangle
 - \sum \limits_{\globalNispIdx=0}^{R} \deriv{x}\Flux(\reconst{st}(y_\globalNispIdx)) \Psi_j(y_\globalNispIdx) w_\globalNispIdx
  = \residual{det}_{j}  + \residual{sq}_{j} . \notag
\end{align}
For $j>M$ the $j$-th moment of $\residual{sts}$ is
\begin{align}\label{eq:NiSPSplitting2}
\Big \langle \fatresidual{sts}, \Psi_j  \Big \rangle  = \Big \langle \deriv{x} \Flux\Big(\sum \limits_{\globalIdx=0}^\polyOrder \fatreconst{st}_{\globalIdx} \Psi_\globalIdx\Big), \Psi_j \Big \rangle = \residual{sc}_{j} .
\end{align}
Formula \eqref{eq:splittingNISP} then follows from \eqref{eq:NiSPSplitting1} and \eqref{eq:NiSPSplitting2}. Formula \eqref{eq:splittingNISPNorm} is an application of
the Pythagorean theorem for  $L_\xi^2(\Xi)$.
\end{proof}
Putting together \thmref{thm:stochApostNumSol} and \lemref{lemma:splitting} we obtain our main 
result, the following a posteriori error estimate with separable error bounds.
\begin{theorem}[A posteriori error bound for the numerical solution with error splitting]  \label{thm:apostSplitting}
Let $u$ be the random entropy solution of \eqref{eq:RIVP}. Then, for any $n=0,\ldots,\ntCells$, the difference between $u(t_n,\cdot, \cdot)$ and  $\numSol^n$ from  \eqref{def:numSolNISP} satisfies
\begin{align*}
\|\stochSol(t_n,\cdot,\cdot)-\numSol^n(\cdot,\cdot) &\|_{\leb{2}{{\Xi} }{\Leb^2(\R)}}^2 \\
 & \leq ~  2\|\reconst{sts}(t_n,\cdot,\cdot)- \numSol^n(\cdot,\cdot)\|_{\leb{2}{{\Xi} }{\Leb^2(\R)}}^2 
  \\ 
 &  + 2 \lowerEta^{-1} \Big( 2 \detResidual(t_n) + 2 \quadResidual(t_n) + \stochResidual(t_n) + \upperEta \stsResidual_0 \Big) 
 \\& \times \exp\Big( \lowerEta^{-1}\int \limits_0^{t_n} \Big(\upperEta \fluxConstant \|\partial_x \reconst{sts}(t,\cdot,\cdot)\|_{\leb{\infty}{\Xi}{\Leb^\infty(\R)}} + \upperEta^2\Big)~\mathrm{d}t\Big).
\end{align*}
\end{theorem}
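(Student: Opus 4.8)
The plan is to prove \thmref{thm:apostSplitting} by combining the two results already established: the a posteriori bound of \thmref{thm:stochApostNumSol} and the orthogonal residual decomposition of \lemref{lemma:splitting}. Observe that the right-hand side of \thmref{thm:apostSplitting} differs from that of \thmref{thm:stochApostNumSol} only in that the single term $\stsResidual(t_n)$ has been replaced by $2\detResidual(t_n) + 2\quadResidual(t_n) + \stochResidual(t_n)$, while the prefactor $2\lowerEta^{-1}$, the initial-data contribution $\upperEta \stsResidual_0$, and the exponential factor are left untouched. Consequently, the proof reduces to inserting the norm estimate \eqref{eq:splittingNISPNorm} into the bound supplied by \thmref{thm:stochApostNumSol}.

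Concretely, I would first invoke \thmref{thm:stochApostNumSol} to write the squared error $\|\stochSol(t_n,\cdot,\cdot)-\numSol^n(\cdot,\cdot)\|_{\leb{2}{\Xi}{\Leb^2(\R)}}^2$ in terms of the reconstruction gap plus the factor $2\lowerEta^{-1}\big(\stsResidual(t_n) + \upperEta \stsResidual_0\big)$ times the exponential. Next I would apply \lemref{lemma:splitting}, and in particular the inequality \eqref{eq:splittingNISPNorm}, which yields the pointwise-in-time bound $\stsResidual(t_n) \le 2\detResidual(t_n) + 2\quadResidual(t_n) + \stochResidual(t_n)$. Substituting this into the previous line produces exactly the claimed estimate.

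The only point requiring attention is the legitimacy of the substitution, which rests on a short monotonicity argument: the coefficient $2\lowerEta^{-1}$ multiplying the bracket is strictly positive since $\lowerEta > 0$, the summand $\upperEta \stsResidual_0$ does not involve $\stsResidual$, and the exponential factor is positive and independent of $\stsResidual(t_n)$. Hence replacing $\stsResidual(t_n)$ by the larger quantity $2\detResidual(t_n) + 2\quadResidual(t_n) + \stochResidual(t_n)$ can only enlarge the right-hand side, so the inequality direction is preserved and the resulting bound is precisely the assertion of \thmref{thm:apostSplitting}. There is essentially no analytical obstacle here; the substitution is routine, and the sole thing to verify is the sign of the coefficients that multiply the residual term.
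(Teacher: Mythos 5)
Your proposal is correct and matches the paper's own argument: the paper obtains \thmref{thm:apostSplitting} precisely by "putting together" \thmref{thm:stochApostNumSol} and \lemref{lemma:splitting}, i.e.\ by substituting the bound $\stsResidual(t_n) \le 2\detResidual(t_n) + 2\quadResidual(t_n) + \stochResidual(t_n)$ from \eqref{eq:splittingNISPNorm} into the estimate of \thmref{thm:stochApostNumSol}. Your additional remark on why the substitution preserves the inequality (positivity of $2\lowerEta^{-1}$ and of the exponential factor) is exactly the implicit monotonicity step the paper relies on.
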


\section{Conclusions and Outlook} 
We derived a novel residual-based a posteriori error bound for
the difference between the entropy solution of \eqref{eq:RIVP} and its
numerical approximation using the NISP method in combination with a RKDG scheme.
Moreover, we proved that the upper bound can be decomposed into three 
parts, where $\detResidual$ quantifies the space-time discretization error of the RKDG scheme,
$\quadResidual$ assesses the quality of the discrete orthogonal projection and
$\stochResidual$ quantifies the stochastic error by truncation of the generalized
polynomial chaos series.
Based on these results, residual-based adaptive numerical schemes, which balance
the contribution of the three different sources of numerical error, can be constructed.
Residual-based space-stochastic adaptive numerical schemes are also considered in \cite{GiesselmannMeyerRohde19}. 

%
\bibliographystyle{siam}
\bibliography{mybib.bib}
\end{document}